\theoremstyle{plain}
\newtheorem{thm}{Theorem}[section]
\newtheorem{cor}{Corollary}
\theoremstyle{definition}
\newtheorem{defn}{Definition}[section]
\theoremstyle{remark}
\newtheorem{rem}{Remark}[section]
\begin{document}

\title{L(t, 1)-Colouring of Graphs
}


\author{Priyanka Pandey         \and
        Joseph Varghese Kureethara 
}


\institute{Priyanka Pandey \at
              Department of Mathematics, Christ University \\
              Tel.: +91-8197826228\\
              \email{priyanka.pandey@maths.christuniversity.in}   \\
           \and
           Joseph Varghese Kureethara \at
             Department of Mathematics, Christ University\\
              Tel.: +91-9341094110\\
             \email{frjoseph@christuniversity.in} \\
}

\date{Received: date / Accepted: date}

\maketitle

\begin{abstract}
	One of the most famous applications of Graph Theory is in the field of Channel Assignment Problems. There are varieties of graph colouring concepts that are used for different requirements of frequency assignments in communication channels. We introduce here L(t, 1)-colouring of graphs. This has its foundation in T-colouring and L(p, q)-colouring. For a given finite set T including zero, an L(t, 1)-colouring of a graph G is an assignment of non-negative integers to the vertices of G such that the difference between the colours of adjacent vertices must not belong to the set T and the colours of vertices that are at distance two must be distinct. The variable t in L(t, 1) denotes the elements of the set T. For a graph G, the L(t, 1)-span of G  is the minimum of the highest colour used to colour the vertices of a graph out of all the possible L(t, 1)-colourings. It is denoted by $\lambda_{t,1} (G)$. We study some properties of L(t, 1)-colouring. We also find upper bounds of $\lambda_{t,1} (G)$ of selected simple connected graphs.
\keywords{ L(t, 1)-colouring \and Communication networks \and Radio frequency \and Colour span}
 \subclass{MSC 94C15 \and MSC  68R10 }
\end{abstract}

\section{Introduction}
\label{intro}
\par Graph colouring problems emerged as a requirement of the partitioning of the set of vertices into various classes for specific needs. The \textit{Channel Assignment Problems} are associated with allocating the radio frequencies to various radio transmitters(channels). The concept of graph colouring  is useful in the optimal assignment of radio frequencies. One of the first types of channel assignment problems was introduced by B. H. Metzger\cite{metz}.

\par A graph G is a triple consisting of a vertex set $V(G)$, an edge set $E(G)$, and a relation that associates with each edge two vertices called its endpoints\cite{west}. When the endpoints of an edge coincides, the edge is called a loop. When there are more than one edge between a pair of vertices the graph is said to have multiple edges. A simple graph is a graph that neither has loops or multiple edges. If no direction is assigned to any of the edges in a graph then the graph is an undirected graph. The endpoints of an edge are also referred to as adjacent vertices. A graph is said to be connected if there exist a sequence of adjacent vertices between any pair of vertices in that graph. For all standard definitions and notations related to graphs we suggest the readers to read \cite{west}.
\par In this work we study only about undirected simple connected graphs. The most important constituents of a graph are its vertices and edges. To distinguish the vertices and edges in a graph with multiple vertices and edges, labels are given to them. Colours are certain types of labels assigned to vertices and edges. If the colours are assigned to the vertices alone, then we call it as vertex colouring of graph. Similarly, we can have edge colouring. When colours are assigned simultaneously to vertices and edges, we call it as total colouring.
\par  Colours are assigned for various practical purposes. Channel assignment problems are a type of colouring problems in graph theory. This is done by representing vertices as the radio transmitters and colours as the radio frequencies. To avoid interference of the radio transmissions, the frequencies of radio transmitters within certain distances must be distinct. In this scenario T-colouring is a useful mathematical tool for optimal frequency allocation.
\subsection{Preliminaries}
\label{sec:2}
 The concept of T-colouring of vertices of a graph was introduced by W. K. Hale\cite{hale}. It was further studied by S. Roberts\cite{fred} in 1991. T-colouring is defined as follows.
\begin{defn}
	For a graph G and a given finite set T of non-negative integers containing 0, a \textit{T-colouring} of G is an assignment \textit{c} of colours (positive integers) to the vertices of G such that if $uw \in E(G)$, then $|c(u)-c(w)| \notin T$ \cite{chrom}.
\end{defn}
Although T-colouring was an answer to frequency assignment requirement, there emerged several other colouring schemes as to meet any new industrial demands. In T-colouring, the condition of colouring was for adjacent transmitters i.e., neighbouring transmitters, whereas transmitters which are neighbours to the neighbours have also the possibility of transmission interference. R. K. Yeh in 1990 and then J. R. Griggs and R. K. Yeh in 1992 addressed this issue\cite{chrom}.
Labelling graphs with a condition at distance 2 was studied extensively by Griggs
and Yeh \cite{phd}. The related colouring problem is known as L(p, q)-colouring which is defined as follows. 
\begin{defn}
	For $p \ge q \ge 1$, an \textit{L(p, q)-colouring} of a graph G=(V, E) is an assignment \textit{c} from V(G) to the set of non-negative integers ${0,1,....,\lambda}$ such that
	\[
	|c(u)-c(w)| \ge 
	\begin{cases}
	p & \text{if}~~ d(u,w)=1\\
	q & \text{if}~~ d(u,w)=2
	\end{cases}
	\]
	where $d(u,w)$ is the distance between the vertices $u$ and $w$\cite{chrom}.
\end{defn}
When $p=2$ and $q=1$, we get L(2, 1)-colouring.
\section{L(t, 1)-colouring}
\label{sec:1}
	\par We introduce a type of colouring taking the ideas from T-colouring and L(2, 1)-colouring.

\begin{defn}
	Let  G=(V, E) be a graph and let $d(u,v$) be the distance between the vertices u and v of G. Let T be a finite set of non-negative integers containing 0. An L(t, 1)-colouring of a graph G is an assignment \textit{c} of non-negative integers to the vertices of G such that $|c(u)-c(v)| \notin T$ if $d(u,v)=1$ and $c(u)\ne c(v)$ if $d(u,v)=2$.
\end{defn} 
	It is not reasonable to assume that there are infinitely many frequencies available for allotment. Therefore, it is of great importance to find the optimal assignment of the available frequencies to radio transmitters. Thus we introduce the concept of span. The span of frequencies have been studied in relation to various types of channel assignment problems\cite{chrom}. We define the L(t, 1)-span of graph G as follows.
\begin{defn}
	For a graph G with a given set T, \textit{L(t, 1)-span of G} denoted by the symbol $\lambda_{t,1} (G)$ is
	\begin{equation}
	\lambda_{t,1} (G) = min~\{\max_{u,v \in V(G)}~\{|c(u)-c(v)| |~c~ \text{is an L(t, 1)-colouring of G}\}\}.
	\end{equation}

\end{defn}
Practically, the highest colour assigned becomes the span of the graph.
	\begin{defn}
		For any colouring $c$, the $c$-span of the colouring is defined as the highest colour assigned to any vertex in graph G.
	\end{defn}
\subsection{Complementary Colouring}
If $c$ is an L(t, 1)-colouring then its induces other L(t, 1)-colouring as well. Complementary colouring is one among them.Let s be the largest colour assigned in an L(t, 1)-colouring \textit{c}. Then for $j\ge0$ the colouring $c'$ of the vertices G defined by 
\begin{equation}
	c'(v)= s + j - c(v) 
\end{equation}
for each vertex $v$ in $V(G)$ is also an L(t, 1)-colouring of G. The colouring $c'$ is called the complementary L(t, 1)-colouring of $c$.\\
\begin{thm}
	If $c$ is an L(t, 1)-colouring of a graph G, then the complementary colouring $c'$ is also an L(t, 1)-colouring of the same graph G.
\end{thm}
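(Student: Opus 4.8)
The plan is to verify directly that $c'$ satisfies the three requirements appearing in the definition of an L(t, 1)-colouring: that it assigns non-negative integers to the vertices, that $|c'(u) - c'(v)| \notin T$ whenever $d(u,v) = 1$, and that $c'(u) \neq c'(v)$ whenever $d(u,v) = 2$. The guiding observation is that $c'$ is obtained from $c$ by the affine reflection $x \mapsto s + j - x$, and that such a map preserves the absolute difference of any two colours while merely reversing their order. Once this is noted, the two distance conditions become immediate, and the only substantive check is well-definedness.

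First I would confirm that $c'$ maps $V(G)$ into the non-negative integers. Since $s$ is the largest colour used by $c$, we have $c(v) \le s$ for every vertex $v$, so $s - c(v) \ge 0$; adding the non-negative shift $j$ gives $c'(v) = s + j - c(v) \ge 0$, and $c'(v)$ is an integer because $s$, $j$, and $c(v)$ are all integers. This is the only step in which the hypotheses $c(v) \le s$ and $j \ge 0$ are actually used, and it is precisely why the definition of $c'$ is built from the maximal colour $s$ together with a non-negative shift.

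Next I would record the key identity: for any two vertices $u$ and $v$,
\[
c'(u) - c'(v) = \bigl(s + j - c(u)\bigr) - \bigl(s + j - c(v)\bigr) = c(v) - c(u),
\]
whence $|c'(u) - c'(v)| = |c(u) - c(v)|$. The adjacency condition then follows at once: if $d(u,v) = 1$, then $|c(u) - c(v)| \notin T$ because $c$ is an L(t, 1)-colouring, and since the absolute differences coincide we obtain $|c'(u) - c'(v)| \notin T$. The distance-two condition follows from the same identity, since $c'(u) - c'(v) = c(v) - c(u)$ vanishes exactly when $c(u) = c(v)$; thus if $d(u,v) = 2$, then $c(u) \neq c(v)$ forces $c'(u) \neq c'(v)$.

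There is no genuine obstacle here. The entire argument rests on the fact that the reflection preserves pairwise absolute differences and equalities of colours, which are the only features of a colouring that the L(t, 1) conditions constrain. If any delicacy were to arise, it would be confined to the non-negativity step, where one must know that no vertex receives a colour exceeding $s$; everything downstream of the identity above is purely mechanical.
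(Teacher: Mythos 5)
Your proposal is correct and follows essentially the same argument as the paper: both rest on the identity $|c'(u)-c'(v)| = |c(u)-c(v)|$, which immediately transfers the adjacency condition ($\notin T$) and the distance-two condition ($\neq 0$) from $c$ to $c'$. Your additional check that $c'$ takes non-negative integer values (using $c(v) \le s$ and $j \ge 0$) is a small point of rigour the paper's proof omits, but it does not change the route.
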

\begin{proof}
	Let \textit{c} be an L(t, 1)-colouring of a graph G with s as the largest colour assigned. Then for any two adjacent vertices \textit{u} and \textit{v}, \\We know that $|c(u)-c(v)| \notin T$. By our definition, $c'(v)= s + j - c(v)$. \\
	Therefore, $|c'(u)-c'(v)| = |s + j - c(u)-(s + j - c(v))|=|c(u)-c(v)|  \notin T $.\\
	Similarly, consider two vertices \textit{u} and \textit{v} such that $d(u,v)=2$. We know $c(u) \ne c(v)$.\\
	By our definition, let the complementary colours of $u$ and $v$ be  $c'(u)$ and $c'(v)$ respectively. Then $|c'(u)-c'(v)|=|s + j - c(u)-(s + j - c(v))|=|c(u)-c(v)|\ne 0$. Thus, the complementary colouring $c'$ also satisfies the condition for L(t, 1)-colouring. 
\end{proof}
Consider an L(t, 1)-colouring $c$. If $j=0$ for the complementary colouring $c'$ of the graph G, it is easy to note that the $c$-span and $c'$-span are identical. Moreover, if c is an optimal colouring of G so is $c'$ and both will give the same L(t, 1)-span for any graph G.\\
For $j\ne 0$, if the c-span of the colouring c is, say $l$, then the c-span for the complementary colouring $c'$ will be $l+j$.\\ 
The following remarks are also easily verifiable.
\begin{rem}
	L(t,1)-colouring is analogous to T-colouring when all the vertices of the graph are at a distance one i.e., for complete graphs $K_n$. Here L(t, 1)-span of G is same as T-span of G.
\end{rem}
\begin{rem}
	L(t, 1)-colouring is analogous to L(p, q)-colouring when q=1 and set T consists of all consecutive integers till (p-1). For such a set T, L(t, 1)-span of G is same as L(p, q)-span of G.
\end{rem}
	\begin{defn}
	Let the number of integers between 0 and max $\{T\}$ that are not available in T be $\sigma$. Hence $\sigma= max \{T\} - |T| + 1$.
\end{defn}
\subsection{L(t, 1)-colouring of star graph}
\par Star graphs are very important in the communication networks. We study the L(t, 1)-colouring of star graphs. At first, we see the effect on L(t, 1)-span of $K_{1,n}$ on changing the set T which will help us to find the upper bound for L(t, 1)-span of $K_{1,n}$. 
\begin{thm}
	The L(t, 1)-span of the star $K_{1,n}$ decreases with the increase in the value of $\sigma$ for $\sigma < n,$ if the maximum value of set T remains same. 
\end{thm}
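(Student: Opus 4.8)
The plan is to argue directly from the structure of the star. Write $u_0$ for the centre of $K_{1,n}$ and $u_1,\dots,u_n$ for its leaves, and set $m=\max\{T\}$. Each leaf is adjacent to $u_0$, so the distance-one rule forces $|c(u_i)-c(u_0)|\notin T$, while the leaves are pairwise at distance two, so the distance-two rule forces $c(u_1),\dots,c(u_n)$ to be pairwise distinct. Since the L(t,1)-constraints depend only on differences of colours, for the upper bound I may simply construct a colouring with $c(u_0)=0$; then $|c(u_i)-c(u_0)|=c(u_i)$, so the leaves must receive $n$ distinct non-negative integers, each avoiding $T$.

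The core is a counting step. Among $0,1,\dots,m$ exactly $\sigma=m-|T|+1$ integers avoid $T$, by the definition of $\sigma$, whereas every integer larger than $m$ avoids $T$ automatically. To keep the largest colour minimal I would colour the leaves greedily with the smallest admissible integers: the $\sigma$ admissible values in $[0,m]$ are used first, and since the hypothesis $\sigma<n$ ensures these are too few, the remaining $n-\sigma$ leaves must take $m+1,m+2,\dots,m+(n-\sigma)$. This colouring therefore uses highest colour $m+(n-\sigma)$, so
\begin{equation}
\lambda_{t,1}(K_{1,n})\le m+n-\sigma .
\end{equation}

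Monotonicity is then immediate from this expression. With $m=\max\{T\}$ and $n$ fixed, $m+n-\sigma$ is strictly decreasing in $\sigma$: increasing $\sigma$ by one frees one more colour inside $[0,m]$, which deletes exactly one overflow colour above $m$ and lowers the span by one. The hypothesis $\sigma<n$ is exactly the condition keeping us in the regime where the leaves overflow past $m$; once $\sigma\ge n$ all leaves fit inside $[0,m]$ and this formula no longer controls the span.

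The step I expect to be the real obstacle is optimality, i.e. upgrading the inequality to equality so that the conclusion is genuinely about the span $\lambda_{t,1}(K_{1,n})$ rather than about one particular colouring. One must rule out any cheaper colouring, and the subtlety is that the centre need not be coloured $0$: a choice $c(u_0)=a$ in the interior of the range replaces the single forbidden band $T$ by the two bands $a+T$ and $a-T$ around $a$, and one has to show that splitting the forbidden set in this way cannot expose more usable small colours than placing the centre at an end of the range. Because this interaction depends on the detailed structure of $T$ and not merely on $\sigma$, it is where I would concentrate the careful analysis.
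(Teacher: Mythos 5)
Your construction (centre coloured $0$, leaves taking the $\sigma$ admissible colours in $[0,m]$, then overflowing to $m+1,\dots,m+(n-\sigma)$) is exactly the colouring the paper uses, and your monotone upper bound $\lambda_{t,1}(K_{1,n})\le m+n-\sigma$ is, in substance, all the paper establishes too. The difference is that the paper silently promotes this to an equality: it sets $s=\lambda_{t,1}(K_{1,n})$ at the outset and simply declares that the canonical centre-at-$0$ colouring has span $s$, then observes that enlarging $\sigma$ by one deletes one overflow colour. So the step you isolate as ``the real obstacle'' --- ruling out a cheaper colouring in which the centre is not coloured $0$ --- is not addressed in the paper at all. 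As it stands, your proposal (by your own admission) and the paper's proof both show only that a particular upper bound decreases with $\sigma$, not that the span itself does; that is a genuine gap, since the theorem is a statement about $\lambda_{t,1}$.

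Moreover, your suspicion about interior placements of the centre is justified: the missing optimality claim is false in general. Take $T=\{0,5\}$ and $n=5$, so $m=5$ and $\sigma=4<n$. The canonical colouring (centre $0$, leaves $1,2,3,4,6$) has span $m+n-\sigma=6$, but colouring the centre $1$ and the leaves $0,2,3,4,5$ is a valid L(t,1)-colouring --- the differences from the centre are $1,1,2,3,4\notin T$ and the leaf colours are distinct --- with span $5$. Since $n+1=6$ pairwise distinct colours are always forced (leaves are mutually at distance two, and $0\in T$ separates each leaf from the centre), $\lambda_{t,1}(K_{1,5})=5<6$. Splitting the forbidden set into the two bands $a+T$ and $a-T$ can indeed expose more usable small colours, exactly as you feared, and this breaks the formula $\lambda_{t,1}(K_{1,n})=m+n-\sigma$ on which the paper's argument implicitly rests; pushing such examples further (e.g.\ $T=\{0,2,4,6,8,9,10\}$ with $\sigma=4$ versus $T'=\{0,2,4,6,8,10\}$ with $\sigma'=5$, $n=6$, where both spans equal $10$) even contradicts the monotonicity statement itself. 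So your proposal is incomplete as a proof, but the obstacle you honestly flagged is not a routine detail you failed to supply: it is a genuine flaw shared by the paper's own proof, and no argument can close it because the claimed optimality, and with it the theorem as stated, fails.
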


\begin{proof}
	For a graph $K_{1,n}$, Let us consider a set T which consists of non-negative integers including 0, such that max \{T\}=r.\\
	Let $c_1 < c_2 <..< c_\sigma$ be the positive integers that are not in T.
	For $\sigma \ge n$, all the pendant vertices of the stars can be coloured from the set of missing colours i.e.,  \{ $c_1 < c_2 <...< c_\sigma$\}. The increase in the cardinality of $\sigma$ does not affect the colouring unless the new term missing from set T say $c_{\sigma +1}$ is less than $c_{\sigma}$.
	If $c_{\sigma +1}$ is less than $c_{\sigma}$, the pendant vertices can be coloured using colours $c_1 < c_2 <\ldots c_{\sigma-1}< c_{\sigma+1}$ instead of colours  $c_1 < c_2 <\ldots< c_\sigma$ which decreases the span of graph.\\
	If $c_{\sigma +1}$ is greater than $c_{\sigma}$, the colouring remains same, hence the span remains same.\\
	Suppose $\sigma < n$.
	Let $\lambda_{t,1} (K_{1,n})$=s.
Therefore, colours used to colour this graph will $\in$ \{0,1,...,s\}. In order to colour the star using L(t, 1)-colouring we start colouring the central vertex as 0 and then we use all missing colours less than r for pendant vertices, since all pendant vertices are at a distance 1 from the central vertex. For the remaining vertices we give colours $r+1, r+2, \ldots , s.$ Since the diameter of a star is 2 all pendant vertices should get different colours. Thus the above colouring is an L(t, 1)-colouring for a $K_{1,n}$ with span s.
	\par Consider a set T' with cardinality ($|T|-1$) and max\{T\}=r.\\
	Therefore, the number of the colours less than r, missing in set T' is given by say $\sigma'=\{\sigma + 1\}$.
	\par\textit{Case 1: $\sigma' < n$}\\
	Here, we can colour the graph $K_{1,n}$ in the same way as described above using colours  $c_1 < c_2 < \ldots < c_\sigma < c_{(\sigma + 1)}$ and continuing with colours r+1, r+2, ... and so on, since the number of vertices remains the same we already coloured one more vertex using missing colour from set T'. Hence we require colours till (s-1) for colouring this graph. Thus, we obtain an L(t, 1)-colouring for $K_{1,n}$ with span (s-1).
	\par\textit{Case 2: $\sigma' = n$}\\
	Here we can colour the graph $K_{1,n}$ in the same way as described above using colours  $c_1 <c_2<\ldots <c_\sigma<c_{(\sigma + 1)}$.Therefore all the missing colours saturate the vertices of $K_{1,n}$ 
	and $c_{(\sigma + 1)}$ becomes the span of the graph which is always less than r, as s is always greater than r.
\end{proof}
The following result is easily obtained by using above arguments.
\begin{cor}
	The L(t, 1)-span of the graph $K_{1,n}$ will remain same for all those sets T whose cardinality and maximum value are same i.e., it does not vary with the set T for $ n > \sigma.$
\end{cor}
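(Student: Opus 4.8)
The plan is to show that the L(t, 1)-span of $K_{1,n}$ can be written as an explicit function of $n$, $\max\{T\}$ and $|T|$ alone, so that two sets with the same cardinality and the same maximum are forced to yield the same span. First I would reuse the optimal colouring constructed in the proof of the preceding theorem: colour the central vertex $0$, assign the $\sigma$ colours less than $r=\max\{T\}$ that are missing from $T$ to $\sigma$ of the pendant vertices, and colour the remaining pendant vertices with $r+1, r+2, \ldots$ in increasing order. Since every pendant vertex is adjacent to the centre and every pair of pendant vertices is at distance two, this is a valid L(t, 1)-colouring, and by the definition of the span the value $\lambda_{t,1}(K_{1,n})$ equals the largest colour used in such an optimal colouring.

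Next, under the hypothesis $n>\sigma$ I would track exactly which colours are forced. All $\sigma$ missing colours lie in the interval $[0,r]$ and are consumed by $\sigma$ of the pendant vertices; the remaining $n-\sigma\ge 1$ pendant vertices must then be coloured with the consecutive integers $r+1,\ldots,r+(n-\sigma)$. Hence the largest colour, and therefore the span, is $r+(n-\sigma)$. Substituting $\sigma=r-|T|+1$ from the definition collapses this to $\lambda_{t,1}(K_{1,n})=n+|T|-1$, an expression in which the individual missing values $c_1<\cdots<c_\sigma$ have disappeared entirely.

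Finally, for two sets $T$ and $T'$ with $|T|=|T'|$ and $\max\{T\}=\max\{T'\}=r$, the quantity $\sigma=r-|T|+1$ is identical, so the construction above produces the same largest colour $r+(n-\sigma)$ in both cases and the two spans coincide. The point demanding the most care is the claim that the concrete missing colours never influence the span once $n>\sigma$: because each of them is at most $r$, while the top-up colours reach $r+(n-\sigma)>r$, the missing colours are always strictly dominated and can never be the maximum. This is precisely where the bound $n>\sigma$ is needed, and it is the step I would state most explicitly. The optimality of the construction itself I would simply inherit from the preceding theorem rather than re-prove, which is what makes the corollary follow ``by using the above arguments.''
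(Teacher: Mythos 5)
Your proposal is correct and takes essentially the same route as the paper: the corollary is stated there without a separate proof (it is ``easily obtained by using above arguments''), and the intended argument is exactly yours, namely that the colouring from the preceding theorem forces the span to be $r+(n-\sigma)=n+|T|-1$, a quantity depending only on $n$, $\max\{T\}$ and $|T|$, with the hypothesis $n>\sigma$ guaranteeing that the maximum colour is one of the top-up colours above $r$ rather than a missing colour. Note that, like the paper itself, you inherit rather than re-establish the optimality (lower-bound) half of this formula, so your argument is faithful to, and no less rigorous than, the source.
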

\begin{thm}
	Let r be the $max\{T\}$. Then $\lambda_{t,1} (K_{1,n})
	= n-\sigma+r   ~\text{for }\sigma < n$
	and
	$\lambda_{t,1} (K_{1,n})< r ~\text{for }\sigma \ge n$.
\end{thm}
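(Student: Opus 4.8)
The plan is to normalize the colouring so that the central vertex receives colour $0$, and then to exploit the hypothesis $0 \in T$: under this normalization a pendant vertex may take a colour $b$ if and only if $|b-0| = b \notin T$, so the admissible pendant colours are exactly the non-negative integers lying outside $T$. Below $r = \max\{T\}$ there are precisely $\sigma$ such colours, say $c_1 < c_2 < \cdots < c_\sigma$, all belonging to $\{1,\dots,r-1\}$ since $0,r \in T$; and every integer $r+1, r+2, \dots$ is automatically admissible. Because $\operatorname{diam}(K_{1,n}) = 2$, the distance-two condition forces the $n$ pendant colours to be pairwise distinct, with no further constraint among them. This reduces the entire problem to packing $n$ distinct admissible colours into an interval $\{0,1,\dots,s\}$ that is as short as possible, the length $s$ being the span after normalization.

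For the upper bounds I would present the greedy colouring already employed in the monotonicity theorem. When $\sigma \ge n$, the pendants can be coloured with $c_1 < \cdots < c_n \le c_\sigma < r$, so the largest colour used is strictly below $r$; since $\lambda_{t,1}$ is the minimum over all colourings, this yields $\lambda_{t,1}(K_{1,n}) \le c_n < r$, establishing the second assertion. When $\sigma < n$, the $\sigma$ missing colours below $r$ no longer suffice, so after assigning all of them I would colour the remaining $n-\sigma$ pendants with the consecutive block $r+1, r+2, \dots, r+(n-\sigma)$; the resulting colouring is a valid L(t,1)-colouring whose largest colour is $n-\sigma+r$, giving $\lambda_{t,1}(K_{1,n}) \le n-\sigma+r$.

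The crux, and the step I expect to be the main obstacle, is the matching lower bound $\lambda_{t,1}(K_{1,n}) \ge n-\sigma+r$ when $\sigma < n$, because one must rule out that placing the central vertex at an interior colour $a \ne 0$ could shorten the span. Here I would argue by counting forbidden colours. In a colouring using the interval $\{0,\dots,s\}$ with central colour $a$, a pendant colour $b$ is forbidden exactly when $|b-a| \in T$, so the forbidden colours form $\bigl((a+T) \cup (a-T)\bigr) \cap \{0,\dots,s\}$. First I would observe that $s \le r$ is impossible, since then at most $\sigma < n$ admissible colours are available and the $n$ pendants cannot be coloured. For $s \ge r$ one checks that the number of forbidden colours is minimized, and equals exactly $|T|$, precisely when $a$ is taken at an endpoint $0$ or $s$; moving $a$ into the interior only drags more of the reflected shifts $a-t$ into range and can never decrease the count. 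Hence the number of admissible pendant colours is at most $s+1-|T|$, and fitting $n$ distinct pendants forces $s+1-|T| \ge n$, that is $s \ge n+|T|-1 = n-\sigma+r$ after substituting $|T| = r-\sigma+1$.

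Combining the construction with this counting bound gives equality in the case $\sigma < n$ and completes the proof. As a consistency check I would note that the closed form $n-\sigma+r$ depends only on $|T|$ and $r$, in agreement with the corollary above, which confirms that the normalization of the central vertex to colour $0$ costs no generality.
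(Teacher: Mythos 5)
Your upper-bound half coincides with the paper's proof --- which is, in fact, all the paper does: it colours the centre $0$, puts the $\sigma$ missing colours below $r$ on pendants, then uses $r+1,\dots,r+(n-\sigma)$, and simply \emph{asserts} equality with no lower-bound argument. You correctly sensed that the lower bound is the crux, but the step you propose for it fails. The claim that, for $s \ge r$, the forbidden set $\bigl((a+T)\cup(a-T)\bigr)\cap\{0,\dots,s\}$ has size at least $|T|$, with equality exactly at $a\in\{0,s\}$, is false: moving $a$ inward does drag reflected shifts $a-t$ into range, but it simultaneously pushes forward shifts $a+t$ out of range at the top. For any $t\in T$ with $t>\max(a,\,s-a)$ --- in particular any $t>s/2$ --- \emph{both} $a+t$ and $a-t$ leave the window $\{0,\dots,s\}$, so an interior $a$ can have strictly fewer than $|T|$ forbidden colours.

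Concretely, take $T=\{0,5\}$, so $r=5$, $|T|=2$, $\sigma=4$, and $n=5>\sigma$. Your lower bound (and the theorem) would give $\lambda_{t,1}(K_{1,5}) = n-\sigma+r = 6$. But colour the centre $3$ and the five pendants $0,1,2,4,5$: every centre--pendant difference is $1$, $2$ or $3$, none in $T$, and the pendant colours are pairwise distinct, so this is a valid L(t,1)-colouring of span $5$. Hence $\lambda_{t,1}(K_{1,5}) \le 5 < 6$; the bound you are trying to prove is not merely hard but false, so no repair of the counting argument can succeed. (The equality does hold when $T=\{0,1,\dots,r\}$, where no $t$ exceeds $s/2$ relative to the relevant windows, which is presumably the case the paper had in mind; for sparse $T$ only the inequality $\lambda_{t,1}(K_{1,n}) \le n-\sigma+r$ survives, and that is exactly what your construction, and the paper's, establishes.) Your closing ``consistency check'' --- that the answer depends only on $|T|$ and $r$ --- is in fact the symptom of the error: the true optimum depends on the structure of $T$, not just its cardinality and maximum. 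The second assertion, $\lambda_{t,1}(K_{1,n}) < r$ for $\sigma \ge n$, is fine in both your write-up and the paper's.
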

\begin{proof}
	\par\textit{Case 1: $\sigma < n$}\\
	From previous theorem, we know for $\sigma <n$, the graph $K_{1,n}$ can be coloured using colours $0$ for the central vertex and colours (say $c_1 < c_2 <\ldots< c_\sigma$.) less than r for pendant vertices. Since $\sigma <n$, we need to get the remaining $(n-\sigma)$ vertices to be coloured. Colouring these remaining vertices by using colours $r+1, r+2, \ldots$ and so on till $ r + (n-\sigma)$, we get $\lambda_{t,1} (K_{1,n})= r + (n-\sigma) $.
	\par$~~~~$\textit{Case 2: $\sigma \ge n$}\\
	For $\sigma \ge n$, all the pendant vertices can be coloured from the set of missing colours (say $c_1 < c_2 <\ldots< c_\sigma$) and since all these colours are less than r, we have $\lambda_{t,1} (K_{1,n}) < r$
\end{proof}
\subsection{L(t, 1)-colouring of  complete k-partite graph}
Here we find the bound for $\lambda_{t,1} (G)$ for a complete k-partite graph G.
\begin{thm}
	The L(t, 1)-span of complete $k$-partite graph for any finite set T, $	\lambda_{t,1} (K_{m_1,m_2,m_3,\ldots, m_k}) \le r(k) + \sum_{i=1}^{k}|m_i| -1 $,	where r is the max\{T\}.
\end{thm}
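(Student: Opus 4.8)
The plan is to exhibit an explicit $L(t,1)$-colouring $c$ of $K_{m_1,\ldots,m_k}$ whose $c$-span is at most $rk+\sum_{i=1}^{k}m_i-1$; since $\lambda_{t,1}(G)$ is the minimum $c$-span over all valid colourings, producing any such $c$ proves the inequality. The structural fact I would rely on is that in $K_{m_1,\ldots,m_k}$ two vertices lying in the same part are at distance exactly $2$ (they are joined through any vertex of another part), so they need only be coloured differently, whereas two vertices in different parts are adjacent and must therefore receive colours whose difference avoids $T$. Because $\max\{T\}=r$, any two colours differing by more than $r$ automatically meet the distance-$1$ requirement, which reduces the task to a simple interval-packing argument.

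First I would order the parts as $1,2,\ldots,k$ and give the $m_i$ vertices of part $i$ the $m_i$ consecutive integers $b_i,b_i+1,\ldots,b_i+m_i-1$, where $b_1=0$ and $b_i=b_{i-1}+m_{i-1}+r$ for $i\ge 2$. Thus part $i$ fills an interval of length $m_i$, the intervals are laid out left to right in increasing order, and between consecutive intervals there sits a block of exactly $r$ unused integers.

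Next I would verify the two $L(t,1)$ conditions. Inside any one part the assigned colours are consecutive integers, hence pairwise distinct, so the distance-$2$ condition holds automatically. For vertices in two different parts $i<j$, all colours of part $j$ exceed all colours of part $i$, so the smallest possible difference is $b_j-(b_i+m_i-1)$; using $b_j\ge b_{i+1}=b_i+m_i+r$ this difference is at least $r+1$, which exceeds $\max\{T\}=r$ and therefore lies outside $T$. Every larger cross-part difference is likewise outside $T$, so all adjacent pairs are coloured legally and $c$ is a genuine $L(t,1)$-colouring.

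Finally I would read off the span as the top of the last interval, $b_k+m_k-1=\sum_{i=1}^{k}m_i+(k-1)r-1$, which is at most $rk+\sum_{i=1}^{k}m_i-1$, establishing the bound. The one place demanding care is the bookkeeping of the gaps: $k$ intervals are separated by only $k-1$ gaps of width $r$, and one must check the strict $>r$ separation between \emph{every} pair of parts, not merely between neighbouring intervals — both points follow from the monotonicity of the sequence $b_i$. This is the main (and essentially only) obstacle, and it in fact yields the slightly sharper value $(k-1)r$ in place of $rk$; a further reduction should be possible by spacing the intervals using the $\sigma$ colours missing from $T$, exactly as in the star case, but that refinement is not needed for the stated inequality.
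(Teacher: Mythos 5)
Your proposal is correct and rests on the same basic strategy as the paper: exhibit an explicit colouring in which each part receives a block of consecutive integers and blocks are separated by $r$ unused integers, so that cross-part (adjacent) differences exceed $\max\{T\}=r$ and hence avoid $T$, while within-part (distance-two) colours are merely distinct. The one genuine difference is the layout. The paper splits the first part: it colours a single vertex $v_1\in m_1$ with $0$, then lays out blocks for $m_2,\ldots,m_k$, and finally places the remaining $|m_1|-1$ vertices of $m_1$ in a block at the very top; this forces $k$ gaps of width $r$ and yields exactly the stated span $rk+\sum_{i=1}^{k}|m_i|-1$. You keep every part contiguous, so only $k-1$ gaps are needed, and your colouring has span $(k-1)r+\sum_{i=1}^{k}m_i-1$, strictly smaller whenever $r\ge 1$ and $k\ge 2$. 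Thus your argument proves the stated inequality and in fact a slightly sharper one; notably, it also shows that the paper's incidental remark that its bound is attained when $T=\{0,1,\ldots,r\}$ cannot stand, since your colouring remains valid for that choice of $T$. Your verification of both conditions (distinctness inside a part, and difference at least $r+1$ between any two parts via monotonicity of the $b_i$) is complete, so there is no gap to repair.
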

\begin{proof}
	Let G be a complete $k$-partite graph with partite sets $ m_1, m_2, m_3, \ldots, m_k$ with cardinalities $n_1, n_2, \ldots, n_k$. Choose any one partite set, select a vertex from it (for our convenience we take  partite set $m_1$ and vertex $v_1 \in m_1$ ). Colour $v_1$ using 0. Since all the other vertices of the other partite set apart from $m_1$ are at a distance 1 from $v_1$, none of them can have  their colour from the set T.\\
	Hence, to colour the vertices in the partite sets $ m_2, m_3, \ldots, m_k$, we start colouring the vertices of set $m_2$ using colour $r+1, r+2, \ldots, r+|m_2|$. We colour the vertices of set $m_3$ using the colours $r+|m_2|+r+1, r+|m_2|+ r+2, \ldots, r+|m_2|+r+|m_3|$, as each of vertices of $m_3$ is at a distance one from all the vertices of $m_1$.  Continuing in similar way, we can colour the vertices of the partite set $m_k$ by using colours $r+|m_2|+r+|m_3|+\ldots+r+|m_{k-1}|+r+1, r+|m_2|+r+|m_3|+\ldots+r+|m_{k-1}|+r+2, \ldots, r+|m_2|+r+|m_3|+\ldots+r+|m_{k-1}|+r+|m_k|$.\\
	Since vertices $v_2, v_3, \ldots, v_{|m_1|}$ in the partite set $m_1$ need to be coloured, which are at distance two from $v_1$ and at a distance one from all the vertices that are already coloured we use the colour $r+|m_2|+r+|m_3|+\ldots+r+|m_{k-1}|+r+|m_k|+ r+1$ for $v_2$. The remaining vertices can be coloured by increasing colour of $v_2$ by one for each vertex. Hence the colours required are $r+|m_2|+r+|m_3|+\ldots+r+|m_{k-1}|+r+|m_k|+ r+1,\ldots, r+|m_2|+r+|m_3|+\ldots+r+|m_{k-1}|+r+|m_k|+ r+|m_1|-1$ $=rk +\sum_{i=1}^{k}|m_i| -1$.
	Therefore, the L(t, 1)-span of the complete k-partite graph becomes $rk +\sum_{i=1}^{k}|m_i| -1$. We achieve this bound only when T has the elements \{0, 1,$\ldots$, r\}. In all other cases the $\lambda{t,1}(K_{m_1,m_2,m_3,...m_k})$ is less than $rk +\sum_{i=1}^{k}|m_i| -1$. Hence,$\lambda_{t,1} (K_{m_1,m_2,m_3,\ldots,m_k}) \le r(k) + \sum_{i=1}^{k}|m_i| -1 $.
\end{proof} 

\section{Conclusion}
\par In this paper, we have introduced L(t, 1)-colouring of graphs. This is a new addition to family of channel assignment problems. The significance of the set T is that certain frequencies can be reserved or classified for the requirements of the state. L(t, 1)-colouring also assures that two radio channels to have the same frequency if they are at a distance of at least three. In this way the disturbances due to frequency interference can be minimized.


%
%



\begin{thebibliography}{99}
%
%
\bibitem{hale}
Hale, W. K.: Frequency assignment: Theory and applications, Proc. IEEE, 68, 1497-1514 (1980)
\bibitem{west}
West, D. B.: Introduction to Graph Theory, Prentice Hall, University of Michigan (2001).
\bibitem{metz}
Metzger, B. H.: Spectrum Management Technique, prensented in 38th National ORSA meeting, Detroit, MI (1970). 
\bibitem{chrom}
Chartrand, G. and Zhang, P.: Chromatic Graph Theory, CRC Press (2009). 
\bibitem{fred}
Roberts, F. S.: T-colourings of graphs: recent results and open problems, Discrete
Math., 93, 229-245 (1991).
\bibitem{phd}
Yeh, R. K.: Labelling graphs with a condition at distance two, Ph.D. Thesis, University of South Carolina (1990).
\end{thebibliography}


\end{document}